\newtheorem{theorem}{Theorem}[section]
\newtheorem{proposition}[theorem]{Proposition}
\theoremstyle{definition}
\newtheorem{remark}[theorem]{Remark}
\newcommand{\N}{\mathbb N}
\newcommand{\R}{\mathbb R}
\newcommand{\essinf}{\mathop{\rm essinf\,}}
\newcommand{\Tr}{\texttt{\rmfamily{Tr}}}
\newcommand{\E}{\texttt{\rmfamily{E}}}
\def\div{\mathop{\rm div}}
\begin{document}

\title[Nonlinear equations involving...]{Multiple solutions of nonlinear equations involving the square root of the Laplacian}

\author{Giovanni Molica Bisci}
\address[G. Molica Bisci]{Dipartimento P.A.U., Universit\`a  degli
Studi Mediterranea di Reggio Calabria, Salita Melissari - Feo di
Vito, 89100 Reggio Calabria, Italy} \email{gmolica@unirc.it}

\author{Du\v{s}an Repov\v{s}}
\address[D. Repov\v{s}]{Faculty of Education
 and Faculty of Mathematics and Physics,
  University of Ljubljana, 1000 Ljubljana, Slovenia}
\email{dusan.repovs@guest.arnes.si}

\author{Luca Vilasi}
\address[L. Vilasi]{Department of Mathematical and Computer Sciences, Physical Sciences and Earth Sciences, University of Messina, Viale F. Stagno d'Alcontres 31, 98166 Messina, Italy}
\email{lvilasi@unime.it}

\keywords{Fractional Laplacian, variational methods, multiple solutions.\\
\phantom{aa}\emph{2010 AMS Subject Classification}: Primary: 49J35, 35A15, 35S15; Secondary: 47G20, 45G05.}


\begin{abstract}
In this paper we examine the existence of multiple solutions of parametric fractional equations involving the square root of the Laplacian $A_{1/2}$ in a smooth bounded domain $\Omega\subset \R^n$ ($n\geq 2$) and with Dirichlet zero-boundary conditions, i.e.
\begin{equation*}
\left\{
\begin{array}{ll}
A_{1/2}u=\lambda f(u) & \mbox{ in } \Omega\\
u=0 & \mbox{ on } \partial\Omega.
\end{array}\right.
\end{equation*}
The existence of at least three $L^{\infty}$-bounded weak solutions is established for certain values of the parameter $\lambda$ requiring that the nonlinear term $f$ is continuous and with a suitable growth. Our approach is based on variational arguments and a variant of Caffarelli-Silvestre's extension method.
\end{abstract}

\maketitle

\tableofcontents

\section{Introduction}\label{sec:introduzione}
In this paper we study under the variational viewpoint the existence of multiple (weak) solutions of the following nonlocal problem
\begin{equation}\tag{$P_\lambda$}\label{problema}
\left\{
\begin{array}{ll}
A_{1/2}u=\lambda \beta(x)f(u) & \mbox{ in } \Omega\\
u=0 & \mbox{ on } \partial\Omega,
\end{array}\right.
\end{equation}
where $\Omega$ is a bounded open subset of $\R^n$ ($n\geq 2$) with Lipschitz boundary $\partial \Omega$, $\lambda$ is a positive real parameter and $\beta:\Omega\to\R$ is a function belonging to $L^{\infty}(\Omega)$ and satisfying
\begin{equation}\label{proprietabeta}
\beta_0:=\essinf_{x\in\Omega}\beta(x) > 0.
\end{equation}

The fractional operator $A_{1/2}$ that appears in \eqref{problema} is defined by using the approach developed in the pioneering works of Caffarelli \& Silvestre \cite{CSCPDE}, Caffarelli \& Vasseur \cite{CafVas}, Cabr\'{e} \& Tan \cite{cabretan}, to which we refer in Section 2 for the precise mathematical description and related properties. As it was pointed out in \cite{cabretan}, the fractions of the Laplacian, like the square root of the Laplacian $A_{1/2}$, are the infinitesimal generators of L\'{e}vy stable diffusion processes and they appear, among the other things, in anomalous diffusions in plasmas, flames propagation, population dynamics, geophysical fluid dynamics and American options in finance.

 Elliptic equations involving fractional powers of the Laplacian have recently been treated in \cite{colorado1, colorado2, MaMu, MRadu2, MuPa, tan} (see also the references therein). In particular, Cabr\'e \& Tan in \cite{cabretan}  have studied the existence, non-existence and regularity of positive solutions of problem \eqref{problema} with power-type nonlinearities, together with \`{a} priori estimates of Gidas-Spruck type and symmetry results of Gidas-Ni-Nirenberg type. Along the same direction we mention the paper \cite{tan3} by Tan.

In the current context, regarding the nonlinear term, we assume that $f:\R\to\R$ is continuous and
\begin{itemize}
\item[] \it{there exist two non-negative constants $a_1,a_2$ and $q\in \left(1, 2n/(n-1)\right)$ such that
\begin{equation}\label{S}
|f(t)|\leq a_1+a_2|t|^{q-1}
\end{equation}
for every $t\in\R$;}
\item[] \it{the potential $F(t):=\displaystyle\int_0^{t} f(\xi)d\xi$ satisfies the sign-condition
\begin{equation}\label{Segno}
\inf_{t\in [0,+\infty)}F(t)\geq 0,
\end{equation}
}
\end{itemize}
in addition to some technical, yet central algebraic assumptions that will be stated later on (see \eqref{AI}).

A special case of our main result, which ensures the multiplicity of positive solutions when $\lambda$ is big enough, reads as follows:

\begin{theorem}\label{boccia}
Let $r>0$ and denote
$$
\Gamma^0_r:=\{(x,0)\in \partial \R^{n+1}_+:|x|<r\},
$$
where $\partial \R^{n+1}_+:=\R^n\times (0,+\infty)$ and $n\geq 2$. Let $f:[0,+\infty)\rightarrow \R$ be a continuous function such that
$$
f(t)\leq a_2t^{m}
$$
for all $t\in [0,+\infty)$ and for some $m\in \left( 1,(n+1)/(n-1)\right) $. Furthermore, assume that there is $\zeta>0$ such that $f(t)>0$ for every $t\in(0,\zeta)$ and $f(\zeta)=0$.

Then there exists $\lambda^{\star}>0$ such that, for every
$\lambda>\lambda^{\star}$,
the following nonlocal problem
\begin{equation}\label{problema24}
\left\{
\begin{array}{ll}
A_{1/2}u= \lambda \beta(x)f(u) & \mbox{\rm in } \Gamma^0_r\\
u> 0 & \mbox{\rm on } \Gamma^0_r\\
u=0 & \mbox{\rm on } \partial\Gamma^0_r,
\end{array}\right.
\end{equation}
admits at least two distinct weak solutions $u_{1,\lambda}, u_{2,\lambda}\in L^{\infty}(\Gamma^0_r) \cap H_0^{1/2}(\Gamma^0_r)$.
\end{theorem}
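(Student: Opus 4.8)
The plan is to recast the nonlocal problem \eqref{problema24} as a local one through the Caffarelli--Silvestre extension and then to combine the direct method with the mountain pass theorem on the resulting energy functional. Writing $\mathcal{C}:=\Gamma^0_r\times(0,+\infty)$ for the half-cylinder over the disk, every weak solution $u$ is the trace on $\Gamma^0_r\times\{0\}$ of a function $w$ in the extension space $H^1_{0,L}(\mathcal{C})$ of finite-energy functions vanishing on the lateral boundary, and such traces correspond exactly to the critical points of
\begin{equation*}
\mathcal{E}_\lambda(w)=\frac12\int_{\mathcal{C}}|\nabla w|^2\,dx\,dy-\lambda\int_{\Gamma^0_r}\beta(x)\widetilde F(\Tr w)\,dx,
\end{equation*}
where $\widetilde F$ is the primitive of a truncated nonlinearity. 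Concretely, I would first truncate $f$ by setting $\widetilde f(t)=f(t)$ for $t\in[0,\zeta]$ and $\widetilde f(t)=0$ for $t<0$ and for $t>\zeta$. Since $\widetilde f$ is bounded with bounded, nonnegative primitive $\widetilde F$, and since the trace embedding $H^1_{0,L}(\mathcal{C})\hookrightarrow L^{q}(\Gamma^0_r)$ is compact for $q<2n/(n-1)$, the functional $\mathcal{E}_\lambda$ is of class $C^1$, coercive, bounded below and satisfies the Palais--Smale condition.

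The two nontrivial solutions will come from the geometry of $\mathcal{E}_\lambda$. First, the growth bound $f(t)\leq a_2 t^m$ together with $f>0$ on $(0,\zeta)$ and continuity forces $f(0)=0$, hence $\widetilde F(t)\leq\frac{a_2}{m+1}t^{m+1}=o(t^2)$ as $t\to0$; using the trace--Sobolev inequality and $m+1>2$, one checks that the origin is a strict local minimizer of $\mathcal{E}_\lambda$, so that $\inf_{\|w\|=\rho}\mathcal{E}_\lambda\geq\alpha>0$ for $\rho$ small (recall $m+1<2n/(n-1)$, so the exponent is subcritical). On the other hand, fixing any $w_0\in H^1_{0,L}(\mathcal{C})$ whose trace takes values in $(0,\zeta)$ on a set of positive measure, the construction of $\widetilde F$ together with $\beta_0>0$ and the sign condition \eqref{Segno} gives $\int_{\Gamma^0_r}\beta\,\widetilde F(\Tr w_0)>0$, whence $\mathcal{E}_\lambda(w_0)\to-\infty$ as $\lambda\to+\infty$. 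I would then define $\lambda^{\star}$ as the threshold beyond which $\mathcal{E}_\lambda(w_0)<0$.

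For $\lambda>\lambda^{\star}$ the coercive, weakly lower semicontinuous functional $\mathcal{E}_\lambda$ attains a global minimum, which is negative and therefore realized at some $u_{1,\lambda}\neq0$. Because $\mathcal{E}_\lambda(0)=0$ while $\inf_{\|w\|=\rho}\mathcal{E}_\lambda\geq\alpha>0$, the minimizer satisfies $\|u_{1,\lambda}\|>\rho$, so the pair $\{0,u_{1,\lambda}\}$ exhibits the mountain pass geometry; the Palais--Smale condition then yields a second critical point $u_{2,\lambda}$ at a strictly positive level $c\geq\alpha$. Since
\begin{equation*}
\mathcal{E}_\lambda(u_{1,\lambda})<0=\mathcal{E}_\lambda(0)<c=\mathcal{E}_\lambda(u_{2,\lambda}),
\end{equation*}
the solutions $u_{1,\lambda}$ and $u_{2,\lambda}$ are distinct and nontrivial.

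It remains to pass from the truncated problem back to \eqref{problema24}. Testing the weak formulation with the negative part of each solution and invoking the strong maximum principle for the extension problem gives $u_{i,\lambda}>0$, while testing with $(\,\cdot-\zeta)^{+}$ and using $\widetilde f\equiv0$ on $(\zeta,+\infty)$ yields the a priori bound $u_{i,\lambda}\leq\zeta$; consequently $\widetilde f(u_{i,\lambda})=f(u_{i,\lambda})$, so the $u_{i,\lambda}$ solve the original equation and lie in $L^{\infty}(\Gamma^0_r)\cap H_0^{1/2}(\Gamma^0_r)$. I expect the main obstacle to be precisely this last step: establishing the $L^\infty$ bound and strict positivity through maximum-principle and regularity arguments adapted to the degenerate extension operator, and verifying carefully that the truncation respects the variational identities. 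By contrast, the minimization and mountain pass arguments are comparatively routine once coercivity and the compactness of the trace embedding are in hand.
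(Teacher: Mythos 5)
Your proposal is correct, but it follows a genuinely different route from the paper's. The paper deduces Theorem \ref{boccia} as a corollary of its general three-solutions result (Theorem \ref{generalkernel0f}): it performs the same truncation of $f$ at $\zeta$ (its $f^{\star}$ coincides with your $\widetilde f$), and then, for $\lambda$ large, chooses the auxiliary parameter $\gamma$ so small that conditions \eqref{tiau2} and \eqref{AI} hold, which places $\lambda$ inside the interval $(\mu_1,\mu_2)$; the multiplicity then comes from Ricceri's variational principle (Theorem \ref{BMB}, giving a local minimizer in a sublevel set), minimization of a truncated functional (Theorem \ref{Prop3}, giving a second minimizer outside that sublevel set), and the Pucci--Serrin theorem (Theorem \ref{PucciSerrin}, giving a third critical point); since at most one of these three can be trivial, two nontrivial solutions survive. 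You instead run the classical Ambrosetti--Rabinowitz scheme --- precisely the model result \cite[Theorem 2.32]{rabinowitz} that the paper cites as its classical antecedent --- transplanted to the fractional setting via the Cabr\'e--Tan extension: the origin is a strict local minimizer because $\widetilde F(t)=o(t^2)$ with subcritical exponent $m+1\in(2,2^{\sharp})$, the global minimum is negative once $\lambda>\lambda^{\star}$, and the mountain pass theorem supplies a second critical point at a level $c\geq\alpha>0$, distinctness and nontriviality being read off directly from the three energy levels. What each approach buys: the paper's route is quantitative (an explicit interval $(\mu_1,\mu_2)$, valid for general $\Omega$ and for sign-changing $f$ subject only to \eqref{S} and \eqref{Segno}), while yours is more self-contained (no Ricceri, no Pucci--Serrin) and gets the $L^{\infty}$ bound $0\leq u_{i,\lambda}\leq\zeta$ for free from the truncation, rather than by invoking \cite[Theorem 5.2]{cabretan}. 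One caveat: the step you rightly flag as the main obstacle --- strict positivity $u_{i,\lambda}>0$ via the strong maximum principle and a Hopf-type boundary lemma for the extension, which needs the regularity theory of \cite{cabretan} to be applicable --- is indeed necessary to pass from the truncated problem back to \eqref{problema24}; note, however, that the paper's own proof is equally silent on this point, so your proposal is no less complete there than the original argument.
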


The above theorem is the fractional analogue, on the $n$-dimensional Euclidean ball, of a classical multiplicity result due to Ambrosetti \& Rabinowitz (see \cite[Theorem 2.32]{rabinowitz}).

In the more general Theorem \ref{generalkernel0f} we determine a precise (bounded) interval of positive parameters $\lambda$ for which the nonlocal problem \eqref{problema} admits at least three $L^\infty$-bounded weak solutions in a suitable fractional Sobolev space $X_0^{1/2}(\Omega)$ (see Section \ref{sec:preliminaries}). Such a result is proved by exporting certain variational techniques to the fractional framework. Indeed, following the paper \cite{cabretan}, we first transform problem \eqref{problema} to a local problem in a cylinder $\mathcal{C}_\Omega$, by using the notion of harmonic extension and the Dirichlet-to-Neumann map on $\Omega$ (cf. Section \ref{sec:preliminaries}). Successively, we study the existence of critical points of the energy functional $\mathcal J_{\lambda}$ associated with the extended problem. A local minimum result for differentiable functionals (\cite{Ricc2}), the classical minimization technique and a critical point result due to Pucci and Serrin (\cite[Theorem 4]{ps}) are the abstract tools behind the existence of such critical points.\par

Indeed, the first weak solution turns out to be a local minimum $w_{1,\lambda}$ for the energy functional $\mathcal J_{\lambda}$ (Proposition \ref{Prop1}); the second one, say $w_{2,\lambda}$, is obtained as a global minimum of $\mathcal J_{\lambda}$ as a byproduct of the sequential weak lower semicontinuity and coercivity of $\mathcal J_{\lambda}$. Finally, thanks to Propositions \ref{Prop2} and \ref{Prop3}, we are able to distinguish them, that is $w_{1,\lambda}\neq w_{2,\lambda}$, and since $\mathcal J_{\lambda}$ fulfills the Palais-Smale condition, Pucci-Serrin's result guarantees the existence of a third critical point $w_{3,\lambda}\notin \{w_{1,\lambda}, w_{2,\lambda}\}$ (see Theorem \ref{generalkernel0f}).\par
 The traces of these solutions give back three weak solutions to \eqref{problema} that are bounded in $L^{\infty}(\Omega)$ owing to \cite[Theorem 5.2]{cabretan}.\par
 
 The paper is structured as follows. In Section 2 we introduce notation, preliminary notions and variational framework. In Section \ref{preparatoryresults} we state and prove our main result with some applications.

\section{Preliminaries}\label{sec:preliminaries}
We start by recalling the functional space setting naturally associated with \eqref{problema}, first introduced in \cite{cabretan}; we also refer to the recent book \cite{MRS}, as well as \cite{valpal}, for detailed accounts on Sobolev spaces of fractional order.

\subsection{Fractional Sobolev spaces}
The power $A_{1/2}$ of the Laplace operator $-\Delta$ in a bounded domain $\Omega$ with zero-boundary conditions is defined via the spectral decomposition, using the powers of the eigenvalues of the original operator. Hence, according to classical results on positive operators in $\Omega$, if $\left\lbrace (\varphi_j,\lambda_j)\right\rbrace _{j\in \N}$ are the eigenfunctions and eigenvalues of the usual linear Dirichlet problem
\begin{equation}\label{problema autovalori}
\left\{\begin{array}{ll}
-\Delta u=\lambda u & \mbox{in } \Omega\\
u=0 & \mbox{on } \partial\Omega,
\end{array}
\right.
\end{equation}
then $\left\lbrace \left( \varphi_j,\lambda_j^{1/2}\right) \right\rbrace _{j\in\N}$ are the eigenfunctions and eigenvalues of the corresponding fractional one:
\begin{equation}\label{problema autovalori2}
\left\{\begin{array}{ll}
A_{1/2} u=\lambda u & \mbox{in } \Omega\\
u=0 & \mbox{on } \partial\Omega.
\end{array}
\right.
\end{equation}

As usual we consider each eigenvalue $\lambda_j$ repeated according to its (finite) multiplicity,
$$
0<\lambda_{1}<\lambda_{2}\le \dots \le \lambda_{j}\le \lambda_{j+1}\le \dots
$$
It is well known that $\lambda_j\rightarrow +\infty$ as $j\rightarrow +\infty$. Moreover, we can suppose the eigenfunctions $\{\varphi_j\}_{j\in \N}$ are normalized as follows:
$$
\int_\Omega |\nabla\varphi_j(x)|^2dx=\lambda_j\int_\Omega |\varphi_j(x)|^2dx=\lambda_j,\quad\forall\, j\in \N,
$$
and
$$
\int_\Omega \nabla\varphi_i(x)\cdot\nabla\varphi_j(x)dx=\int_\Omega \varphi_i(x)\varphi_j(x)dx=0,\quad\forall\, i\neq j.
$$
Finally, standard regularity arguments ensure that $\varphi_j\in C^{2}(\overline \Omega)$ for every $j\in \N$.

The operator $A_{1/2}$ turns out to be well-defined on the Sobolev space
$$
H_0^{1/2}(\Omega):=\left\{u\in L^2(\Omega): u=\displaystyle\sum_{j=1}^{\infty}a_j\varphi_j \mbox{  and } \displaystyle\sum_{j=1}^{\infty} a_j^2\lambda_j^{1/2}<+\infty\right\},
$$
endowed with the norm
$$
\|u\|_{H_0^{1/2}(\Omega)}:=\displaystyle\left(\sum_{j=1}^{\infty} a_j^2\lambda_j^{1/2}\right)^{1/2},
$$
and it has the following form
$$
A_{1/2}u:=\sum_{j=1}^{\infty} a_j\lambda_j^{1/2}\varphi_j, \quad \forall u\in H_0^{1/2}(\Omega).
$$

\subsection{The extension problem}\label{subsec:b}
Associated with the bounded domain $\Omega$, let us consider the cylinder
$$
\mathcal{C}_\Omega:=\{(x,y): x\in \Omega,\,\, y>0\}\subset \R_+^{n+1},
$$
and denote by $\partial_L \mathcal{C}_\Omega:=\partial\Omega\times [0,+\infty)$ its lateral boundary.

For a function $u\in H_0^{1/2}(\Omega)$ define the harmonic extension $\E(u)$ to $\mathcal{C}_\Omega$ as the solution of the problem
\begin{equation}\label{1.1}
 \left\{
\begin{array}{ll}
\div(\nabla \E(u))=  0  &\mbox{ in } \mathcal{C}_\Omega\\
\displaystyle{\E(u)=0}  &\mbox{ on } \partial_L\mathcal{C}_\Omega\\
\Tr(\E(u))=u  &\mbox{ on } \Omega,
\end{array}\right.
\end{equation}
where
$$
\Tr(\E(u))(x):=\E(u)(x,0),\quad\, \forall\, x\in \Omega.
$$

The extension function $\E(u)$ belongs to the Hilbert space
$$
X^{1/2}_0(\mathcal{C}_\Omega):=\left\{w\in L^2(\mathcal{C}_\Omega):w=0\,\,{\rm on }\,\, \partial_L \mathcal{C}_\Omega,\,\, \int_{\mathcal{C}_\Omega}|\nabla w(x,y)|^2\,dxdy<+\infty\right\},
$$
equipped with the standard norm
$$
\|w\|_{X^{1/2}_0(\mathcal{C}_\Omega)}:=\left(\int_{\mathcal{C}_\Omega}|\nabla w(x,y)|^2\,dxdy\right)^{1/2},
$$
which can also be characterized as
$$
X^{1/2}_0(\mathcal{C}_\Omega)=\left\{w\in L^2(\mathcal{C}_\Omega):w=\displaystyle\sum_{j=1}^{\infty}b_j\varphi_je^{-\lambda_j^{1/2}y} \mbox{ with } \displaystyle\sum_{j=1}^{\infty} b_j^2\lambda_j^{1/2}<+\infty\right\},
$$
see \cite[Lemma 2.10]{cabretan}.

In our framework, a crucial relationship between the spaces $X^{1/2}_0(\mathcal{C}_\Omega)$ and $H_0^{1/2}(\Omega)$ introduced above is played by the trace operator $\Tr:X^{1/2}_0(\mathcal{C}_\Omega)\rightarrow H_0^{1/2}(\Omega)$ defined by
$$
\Tr(w)(x):=w(x,0),\,\quad \forall\, x\in \Omega.
$$
which turns out to be a continuous map (see \cite[Lemma 2.6]{cabretan}). We also notice that
$$
H^{1/2}_0(\Omega)=\left\lbrace u\in L^2(\Omega):u=\Tr(w) \mbox{ for some } w\in X^{1/2}_0(\mathcal{C}_\Omega)\right\rbrace \subset H^{1/2}(\Omega),
$$
and that the extension operator $\E:H_0^{1/2}(\Omega)\rightarrow X^{1/2}_0(\mathcal{C}_\Omega)$ is an isometry, i.e.
$$
\|\E(u)\|_{X^{1/2}_0(\mathcal{C}_\Omega)}=\|u\|_{H_0^{1/2}(\Omega)},
$$
for every $u\in H_0^{1/2}(\Omega)$. Here $H^{1/2}(\Omega)$ denotes the Sobolev space of order $1/2$ defined by
$$
H^{1/2}(\Omega):=\left\{u\in L^2(\Omega):\int_{\Omega\times\Omega}\frac{|u(x)-u(y)|^2}{|x-y|^{n+1}}dxdy<+\infty\right\},
$$
with the norm
$$
\|u\|_{H^{1/2}(\Omega)}:=\left(\int_{\Omega\times\Omega}\frac{|u(x)-u(y)|^2}{|x-y|^{n+1}}dxdy+\int_\Omega|u(x)|^2dx\right)^{1/2}.
$$

Clearly, for every $w\in X^{1/2}_0(\mathcal{C}_\Omega)$ the following trace inequality holds
\begin{equation}\label{traceine}
\|\Tr(w)\|_{H^{1/2}_0(\Omega)}\leq \|w\|_{X^{1/2}_0(\mathcal{C}_\Omega)},
\end{equation}
while, due to \cite[Lemmas 2.4 and 2.5]{cabretan},
the embedding $\Tr(X_0^{1/2}(\mathcal{C}_\Omega))\hookrightarrow
L^p(\Omega)$ is continuous for any $p\in [1,2^\sharp]$ and compact whenever $p\in [1,2^\sharp)$, where $2^\sharp:=2n/(n-1)$ denotes the \textit{fractional critical Sobolev exponent}. Thus, if $p\in [1,2^\sharp]$, then there exists a positive constant $c_p$ (depending on $p$, $n$ and $|\Omega|$) such that
\begin{equation}\label{constants}
\left(\displaystyle\int_{\Omega}|\Tr(w)(x)|^p dx\right)^{1/p}\leq c_p \left(\displaystyle\int_{\mathcal{C}_\Omega}|\nabla w(x,y)|^2\,dxdy\right)^{1/2},
\end{equation}
for every $w\in X_0^{1/2}(\mathcal{C}_\Omega)$.

As we have briefly outlined in the introduction, we will adopt the following equivalent definition of the square root of the Laplacian (see for instance \cite{colorado1, colorado2, cabretan}). By using the extension $\E(u)\in X^{1/2}_0(\mathcal{C}_\Omega)$ of a function $u\in H_0^{1/2}(\Omega)$, i.e. the solution of \eqref{1.1}, the fractional operator $A_{1/2}$ in $\Omega$ acting on $u$ agrees with the map
$$
A_{1/2}u(x):=-\lim_{y\rightarrow 0^+}\frac{\partial \E(u)(x,y)}{\partial y}\quad\forall\, x\in \Omega,
$$
i.e.
$$
A_{1/2}u(x)=\displaystyle\frac{\partial \E(u)(x,0)}{\partial \nu}\quad\,\forall\, x\in \Omega
$$
where $\nu$ is the unit outer normal to $\mathcal{C}_{\Omega}$ at $\Omega\times\{0\}$.

\subsection{Weak solutions}
Fix $\lambda>0$ and assume that $f:\R\rightarrow\R$ is continuous and satisfies \eqref{S}. We say that a function $u=\Tr(w)\in H^{1/2}_0(\Omega)$ is a \textit{weak solution} of \eqref{problema} if $w\in X^{1/2}_0(\mathcal{C}_\Omega)$ weakly solves
\begin{equation}\label{2}
\left\{
\begin{array}{ll}
-\div(\nabla w)=  0  & \mbox{ in } \mathcal{C}_\Omega\\
w=0 & \mbox{ on } \partial_L\mathcal{C}_\Omega\\
\displaystyle\frac{\partial w}{\partial \nu}= \lambda\beta(x)f(\Tr(w)) & \mbox{ on } \Omega,
\end{array}\right.
\end{equation}
i.e.
\begin{equation}\label{EL}
\int_{\mathcal{C}_\Omega}\langle \nabla w,\nabla \varphi \rangle dxdy=\lambda\int_{\Omega}\beta(x)f(\Tr (w)(x))\Tr(\varphi)(x)dx,
\end{equation}
for every $\varphi\in X^{1/2}_0(\mathcal{C}_\Omega)$.

As direct computations prove, equation \eqref{EL} represents the variational formulation of \eqref{2} and the energy functional $\mathcal J_{\lambda}:X^{1/2}_0(\mathcal{C}_\Omega)\to \R$ associated with \eqref{EL} is defined by
\begin{equation}\label{JKlambda}
\mathcal J_{\lambda}(w) :=\frac{1}{2}\int_{\mathcal{C}_\Omega}|\nabla w(x,y)|^2\,dxdy -\lambda\int_{\Omega}\beta(x)F(\Tr (w)(x))dx,
\end{equation}
for every $w\in X^{1/2}_0(\mathcal{C}_\Omega)$. Indeed, under our assumptions on the nonlinear term, it is straightforward to show that
$\mathcal J_{\lambda}$ is well-defined and of class $C^1$ in $X^{1/2}_0(\mathcal{C}_\Omega)$ and that its critical points
are exactly the weak solutions of problem \eqref{2}. The traces of such critical points being weak solutions of \eqref{problema}, we can employ methods from critical point theory to attack problem \eqref{problema}.

To this end, the following abstract theorem due to Ricceri (see \cite{Ricc2}), restated here in a more convenient form, plays a key role in our study.

\begin{theorem}\label{BMB}
Let $X$ be a reflexive real Banach space, and let $\Phi,\Psi:X\to\R$ be two G\^{a}teaux differentiable functionals such that $\Phi$ is
strongly continuous, sequentially weakly lower semicontinuous and coercive. Furthermore, assume that $\Psi$ is sequentially weakly upper semicontinuous. For every $r>\inf_X
\Phi$, put
$$
\varphi(r):=\inf_{w\in\Phi^{-1}((-\infty,r))}\frac{\left(\displaystyle\sup_{z\in\Phi^{-1}((-\infty,r))}\Psi(z)\right)-\Psi(w)}{r-\Phi(w)}.
$$
Then for each $r>\inf_X\Phi$ and each
$\lambda\in\left(0,{1}/{\varphi(r)}\right)$, the restriction
of $J_\lambda:=\Phi-\lambda\Psi$ to
$\Phi^{-1}((-\infty,r))$ admits a global minimum, which is a
critical point $($local minimum$)$ of $J_\lambda$ in $X$.
\end{theorem}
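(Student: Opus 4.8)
The plan is to obtain the asserted minimum by the direct method on a closed sublevel set, and then to promote it to a local (hence critical) point by checking that the defining constraint is inactive. Throughout I fix $r>\inf_X\Phi$ and $\lambda\in(0,1/\varphi(r))$, and abbreviate $\Phi_r:=\Phi^{-1}((-\infty,r))$ and $S:=\sup_{z\in\Phi_r}\Psi(z)$. First I would record that $S<+\infty$: since $\Phi$ is coercive and sequentially weakly lower semicontinuous and $X$ is reflexive, the set $\Phi^{-1}((-\infty,r])$ is bounded (coercivity) and sequentially weakly closed (lower semicontinuity), hence sequentially weakly compact; as $\Psi$ is sequentially weakly upper semicontinuous it attains a maximum there, and so is bounded above on the smaller set $\Phi_r$. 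Next, the condition $\lambda\varphi(r)<1$ together with the definition of $\varphi(r)$ as an infimum furnishes a point $w_0\in\Phi_r$ with $\lambda\big(S-\Psi(w_0)\big)<r-\Phi(w_0)$, which rearranges into the key strict inequality $J_\lambda(w_0)<r-\lambda S$. In particular $m:=\inf_{\Phi_r}J_\lambda<r-\lambda S$.

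For existence I would take a minimizing sequence $\{w_n\}\subset\Phi_r$ with $J_\lambda(w_n)\to m$. Each $w_n$ lies in the bounded set $\Phi^{-1}((-\infty,r])$, so by reflexivity a subsequence converges weakly to some $w^\ast$; lower semicontinuity of $\Phi$ gives $\Phi(w^\ast)\le r$, while the fact that $-\lambda\Psi$ is sequentially weakly lower semicontinuous (because $\lambda>0$ and $\Psi$ is sequentially weakly upper semicontinuous) makes $J_\lambda=\Phi-\lambda\Psi$ sequentially weakly lower semicontinuous, whence $J_\lambda(w^\ast)\le\liminf_n J_\lambda(w_n)=m$.

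The crux, and the step I expect to be the main obstacle, is to show that the constraint is inactive, i.e. $\Phi(w^\ast)<r$, so that $w^\ast$ genuinely lies in the open set $\Phi_r$. I would argue by contradiction: if $\Phi(w^\ast)=r$, then since $\Phi(w_n)<r$ for every $n$ and $r=\Phi(w^\ast)\le\liminf_n\Phi(w_n)\le\limsup_n\Phi(w_n)\le r$, all these quantities coincide and $\Phi(w_n)\to r$. Combining this with $J_\lambda(w_n)\to m$ yields $\Psi(w_n)=\tfrac1\lambda\big(\Phi(w_n)-J_\lambda(w_n)\big)\to\tfrac1\lambda(r-m)$, and this limit is strictly larger than $S$ precisely because $m<r-\lambda S$. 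This contradicts $\Psi(w_n)\le S$ for all $n$, a convergent sequence bounded above by $S$ cannot have limit exceeding $S$. Hence $\Phi(w^\ast)<r$, so $w^\ast\in\Phi_r$ and $J_\lambda(w^\ast)\le m\le J_\lambda(w^\ast)$; that is, $w^\ast$ is a global minimum of the restriction of $J_\lambda$ to $\Phi_r$.

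Finally, to conclude that $w^\ast$ is a critical point of $J_\lambda$ on all of $X$, I would invoke the strong continuity of $\Phi$: the set $\Phi_r=\Phi^{-1}((-\infty,r))$ is then open and is therefore a neighborhood of $w^\ast$. A global minimizer of $J_\lambda$ on an open neighborhood is a local minimizer of $J_\lambda$ in $X$, and a local minimizer of a G\^ateaux differentiable functional is a critical point, because for every direction $v$ both $J_\lambda'(w^\ast)[v]$ and $J_\lambda'(w^\ast)[-v]=-J_\lambda'(w^\ast)[v]$ are nonnegative, forcing $J_\lambda'(w^\ast)=0$. This yields exactly the stated conclusion.
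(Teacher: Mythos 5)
Your proof is correct, but there is nothing in the paper to compare it against: Theorem \ref{BMB} is stated as an abstract tool, quoted (in restated form) from Ricceri's paper \cite{Ricc2}, and the authors give no proof of it. Your argument is a clean, self-contained direct-method proof, more elementary than Ricceri's original derivation, which obtains the statement as a consequence of a more abstract variational principle. The step worth singling out is your handling of the possibly active constraint. The naive route would be to minimize $J_\lambda$ over the sequentially weakly compact set $\Phi^{-1}((-\infty,r])$ and then try to rule out a minimizer on the boundary $\{\Phi=r\}$; this is delicate because $S$ is a supremum over the \emph{open} sublevel set only, so one has no a priori bound $\Psi\le S$ at boundary points. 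You bypass this entirely by taking the minimizing sequence $\{w_n\}$ inside $\Phi^{-1}((-\infty,r))$, where $\Psi(w_n)\le S$ is available, and deriving the contradiction $\lambda\Psi(w_n)=\Phi(w_n)-J_\lambda(w_n)\to r-m>\lambda S$ from the strict inequality $m<r-\lambda S$, which you correctly extract from $\lambda\varphi(r)<1$ via a near-optimal point $w_0$. The remaining ingredients --- finiteness of $S$ via weak sequential compactness of $\Phi^{-1}((-\infty,r])$ and weak upper semicontinuity of $\Psi$, weak lower semicontinuity of $J_\lambda=\Phi-\lambda\Psi$, openness of $\Phi^{-1}((-\infty,r))$ from the norm-continuity of $\Phi$, and the passage from local minimum to critical point via G\^{a}teaux differentiability --- are all handled correctly.
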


In addition to the above result, the following classical theorem by Pucci and Serrin (see \cite[Theorem 4]{ps} and \cite[Theorem 3.10]{rabinowitz}) will allow us to deduce the existence of a further critical point.

\begin{theorem}\label{PucciSerrin}
Let $J:X\rightarrow\R$ be a $C^1$-functional satisfying the $(\rm PS)$ condition. If $J$ has a pair of local minima or maxima, then $J$ admits a third critical point.
\end{theorem}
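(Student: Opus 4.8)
The plan is to reduce to the case of two local minima and then run a mountain-pass argument over the family of paths joining them. First I would observe that the two situations are equivalent: if $J$ has a pair of local maxima, then $-J$ has a pair of local minima, $-J$ is again $C^1$ and satisfies $(\mathrm{PS})$ (the condition is insensitive to a change of sign), and $J$ and $-J$ share exactly the same critical points. Hence it suffices to treat the case in which $J$ possesses two distinct local minima $u_0\neq u_1$.

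Next I would set up the minimax value. Denoting by
\[
\Gamma:=\{\gamma\in C([0,1],X):\gamma(0)=u_0,\ \gamma(1)=u_1\}
\]
the set of continuous paths from $u_0$ to $u_1$, I would introduce
\[
c:=\inf_{\gamma\in\Gamma}\ \max_{t\in[0,1]}J(\gamma(t)).
\]
Since $\gamma(0)=u_0$ and $\gamma(1)=u_1$ for every admissible path, one trivially has $c\geq\max\{J(u_0),J(u_1)\}$. The heart of the matter is then to promote $c$ to a critical value and to guarantee that the associated critical point is genuinely new.

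The non-degenerate case is the easy one. If $c>\max\{J(u_0),J(u_1)\}$, then the valleys at $u_0,u_1$ are separated from the ridge by a strictly positive gap, which is exactly the geometric hypothesis of the Ambrosetti--Rabinowitz mountain pass theorem. Invoking $(\mathrm{PS})$, the level $c$ is a critical value of $J$; any critical point $u_2$ with $J(u_2)=c$ then satisfies $J(u_2)>J(u_i)$ for $i=0,1$ and is therefore distinct from both minima, yielding the desired third critical point.

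The genuine obstacle is the degenerate case $c=\max\{J(u_0),J(u_1)\}$, which is precisely the situation that Pucci and Serrin's refinement is designed to handle. Here the naive mountain-pass conclusion may produce a critical point coinciding with one of the minima, so a sharper deformation argument is required. Arguing by contradiction, I would assume that $u_0$ and $u_1$ are the only critical points of $J$. The strategy is to exploit the fact that a point which is simultaneously a local minimum and sits at the minimax level cannot be isolated in the critical set without contradicting the minimax characterisation: using a quantitative deformation lemma (legitimate under $(\mathrm{PS})$), one pushes the sublevel set near the higher minimum strictly below level $c$ along suitable paths, thereby constructing a competitor path whose maximum value is $<c$ and contradicting the definition of $c$. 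Ruling out this scenario forces the existence of a critical point at level $c$ distinct from $u_0,u_1$, or else an entire continuum of critical points, either way producing the third critical point. Making this deformation step rigorous---carefully tracking the behaviour of $J$ on small spheres around a possibly non-strict local minimum---is the technically delicate part, and it is the core content of the Pucci--Serrin theorem.
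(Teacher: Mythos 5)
The first thing to note is that the paper does not prove this statement at all: it is quoted as a classical result, with a pointer to Pucci--Serrin \cite[Theorem 4]{ps} and to Rabinowitz \cite[Theorem 3.10]{rabinowitz}, so there is no internal proof to compare against and your attempt must stand on its own. Parts of it do: the reduction of the two-maxima case to the two-minima case via $-J$ (which is $C^1$, satisfies $(\rm PS)$, and has the same critical points) is correct, and so is the non-degenerate case $c>\max\{J(u_0),J(u_1)\}$, where the classical mountain pass theorem applies and any critical point at level $c$ is automatically distinct from both minima. But the degenerate case $c=\max\{J(u_0),J(u_1)\}$ --- which is the \emph{entire} content of the Pucci--Serrin theorem beyond Ambrosetti--Rabinowitz --- is left as an acknowledged sketch, so the proposal is not a proof of the statement.

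Moreover, the sketch you give of that case is built around a contradiction that can never materialize. Suppose $c=J(u_0)\geq J(u_1)$. Every admissible path satisfies $\max_{t\in[0,1]}J(\gamma(t))\geq J(\gamma(0))=J(u_0)=c$ simply because it starts at $u_0$; hence a ``competitor path whose maximum value is $<c$'' does not exist, whether or not $u_0$ and $u_1$ are the only critical points, and denying a third critical point can never contradict the definition of $c$ in this way. The actual mechanism is different: since $u_0$ is a local minimum, $J\geq c$ on a small sphere $S$ centered at $u_0$ of radius less than $\|u_1-u_0\|$; this sphere separates $u_0$ from $u_1$, so every admissible path must cross it. If $S$ carried no critical point at level $c$, a deformation \emph{localized in space near} $S$ (hence fixing the endpoints, which lie at positive distance from $S$ --- this spatial cutoff is exactly what the naive level-set deformation lacks, since $J(u_0)=c$ prevents fixing $u_0$ by level considerations alone) would produce an admissible path crossing $S$ only at points where $J<c$, contradicting $J\geq c$ on $S$. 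The contradiction is thus with the separating property of $S$, not with the minimax characterisation, and it yields a critical point \emph{on} $S$, automatically distinct from $u_0$ and $u_1$. One also needs the preliminary observation that if either local minimum is non-strict, then any nearby equal-level minimizer is itself a critical point and the proof ends at once. These are the missing ideas, and they constitute the substance of the theorem you were asked to prove.
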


For the sake of completeness we recall that, if $X$ is a real Banach space, a $C^1$-functional $J:X\to\R$ is said to satisfy the Palais-Smale condition at level $\mu\in\R$ when
\begin{itemize}
\item[$\textrm{(PS)}_{\mu}$] {\it Every sequence $\{z_{j}\}_{j\in \N} \subset X$  such that
$$
J(z_j)\to \mu \quad{\it and}\quad \|J'(z_j)\|_{X^*}\to 0
$$
as $j\rightarrow +\infty$, possesses a convergent subsequence in $X$.}
\end{itemize}

Here $X^*$ denotes the topological dual of $X$. We say that $J$ satisfies the Palais-Smale condition ($(\rm PS)$ for short) if $\textrm{(PS)}_{\mu}$ holds for every $\mu\in \R$.

\section{The main result}\label{preparatoryresults}
Define the functionals $\Phi,\Psi:X^{1/2}_0(\mathcal{C}_\Omega)\to\R$, naturally associated with \eqref{problema}, by
$$
\Phi(w):=\frac{1}{2}\|w\|^2_{X^{1/2}_0(\mathcal{C}_\Omega)}, \quad \Psi(w):=\int_\Omega \beta(x)F(\Tr(w)(x))dx,\quad\forall\, w\in X^{1/2}_0(\mathcal{C}_\Omega).
$$

Clearly, $\Phi$ is a coercive, continuously G\^{a}teaux-differentiable and sequentially weakly lower semicontinuous functional. On the other hand, $\Psi$ is well-defined, continuously G\^{a}teaux-differentiable and, on account of \eqref{S} and the compactness of the embedding $\Tr(X_0^{1/2}(\mathcal{C}_\Omega))\hookrightarrow
L^p(\Omega)$, $p\in [1,2^\sharp)$, also weakly continuous in $X^{1/2}_0(\mathcal{C}_\Omega)$.

It is easy to deduce that
\begin{equation}\label{derivatefunz}
\Phi'(w)(\varphi)=\int_{\mathcal{C}_\Omega}\langle \nabla w,\nabla \varphi \rangle dxdy,\quad { \rm and }\quad \Psi'(w)(\varphi)=\int_{\Omega}\beta(x)f(\Tr (w)(x))\Tr(\varphi)(x)dx,
\end{equation}
for every $\varphi\in X^{1/2}_0(\mathcal{C}_\Omega)$.

The aim of this section is to establish a precise interval of values of the parameter $\lambda$ for which the functional $\mathcal J_{\lambda}$ admits at least three critical points.

To this end, fix a point $x_0\in \Omega$ and choose $\tau>0$ so that
\begin{equation}\label{tiau}
{B}(x_0,\tau):=\{x\in\R^n:|x-x_0|< \tau\}\subseteq\Omega.
\end{equation}
Define also
$$
g_\Omega^{(n)}:=\frac{2^n-1}{2^{n-1}}\tau^{n-2}\omega_n, \quad h_\Omega^{(n)}:= g_\Omega^{(n)} + \frac{|\Omega|}{8},
$$
and
$$
K_1:=\frac{\sqrt{2}c_1 h_{\Omega}^{(n)}\|\beta\|_{\infty}}{\omega_n\displaystyle\beta_0}\left(\frac{2}{\tau}\right)^{n},\quad K_2:=\frac{2^{q/2}c_q^q h_{\Omega}^{(n)}\|\beta\|_{\infty}}{q\omega_n\displaystyle\beta_0}\left(\frac{2}{\tau}\right)^{n},
$$
where
$$
\omega_n:=\frac{\pi^{n/2}}{\displaystyle\Gamma\left(1+\frac{n}{2}\right)}
$$
denotes the Lebesgue measure of the unit ball in $\R^n$ and
$$
\Gamma(t):=\int_0^{+\infty}z^{t-1}e^{-z}dz, \quad\forall t>0,
$$
is the classical gamma function.

\begin{theorem}\label{Prop1}
Let $f:\R\rightarrow\R$ be a function satisfying \eqref{S}. Then for every $\gamma>0$ and every $\lambda<\mu_2$, with
\begin{equation}
\mu_2:=\frac{2^n}{\tau^n\omega_n\beta_0}\left(\frac{h^{(n)}_\Omega\gamma}{a_1 K_1 + a_2 K_2\gamma^{q-1}}\right),
\end{equation}
there exists a local minimum $w_{1,\lambda}\in \Phi^{-1}((-\infty,\gamma^2))$ of $\mathcal J_{\lambda}$ in $X^{1/2}_0(\mathcal{C}_\Omega)$.
\end{theorem}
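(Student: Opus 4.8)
My plan is to invoke Ricceri's local-minimum theorem (Theorem \ref{BMB}) with the reflexive Banach space $X=X^{1/2}_0(\mathcal{C}_\Omega)$, the functionals $\Phi,\Psi$ introduced at the beginning of this section, and the threshold $r=\gamma^2$. All the structural hypotheses are already at our disposal: $\Phi$ is coercive, strongly continuous and sequentially weakly lower semicontinuous, while $\Psi$ is (sequentially) weakly continuous, hence in particular sequentially weakly upper semicontinuous. Since $\Phi\ge 0$ and $\Phi(0)=0$ we have $\inf_X\Phi=0<\gamma^2=r$, so the theorem is applicable. It will produce the desired $w_{1,\lambda}\in\Phi^{-1}((-\infty,\gamma^2))$ as soon as I show that $\lambda<\mu_2$ forces $\lambda\in(0,1/\varphi(r))$; concretely, I would prove the chain $\lambda<\mu_2\le 1/\varphi(r)$.

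To bound $\varphi(r)$ from above I would test the infimum defining $\varphi(r)$ at $w=0$. Because $F(0)=0$, one has $\Phi(0)=\Psi(0)=0$ and $0\in\Phi^{-1}((-\infty,\gamma^2))$, whence
\[
\varphi(r)\le\frac{\displaystyle\sup_{\Phi(z)<\gamma^2}\Psi(z)}{\gamma^2}.
\]
It then remains to estimate the numerator over the sublevel set $\{z:\Phi(z)<\gamma^2\}=\{z:\|z\|_{X^{1/2}_0(\mathcal{C}_\Omega)}<\sqrt2\,\gamma\}$.

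For the key estimate I would fix $z$ with $\|z\|_{X^{1/2}_0(\mathcal{C}_\Omega)}<\sqrt2\,\gamma$. Using $0<\beta(x)\le\|\beta\|_\infty$ gives $\Psi(z)\le\|\beta\|_\infty\int_\Omega|F(\Tr(z)(x))|\,dx$, and integrating the growth condition \eqref{S} yields the pointwise bound $|F(t)|\le a_1|t|+\frac{a_2}{q}|t|^q$. Hence $\Psi(z)\le\|\beta\|_\infty\big(a_1\|\Tr(z)\|_{L^1(\Omega)}+\frac{a_2}{q}\|\Tr(z)\|_{L^q(\Omega)}^q\big)$. Applying the trace-Sobolev inequality \eqref{constants} with $p=1$ and with $p=q$ (legitimate since $q\in(1,2^\sharp)$) gives $\|\Tr(z)\|_{L^1(\Omega)}\le c_1\|z\|_{X^{1/2}_0(\mathcal{C}_\Omega)}$ and $\|\Tr(z)\|_{L^q(\Omega)}^q\le c_q^q\|z\|_{X^{1/2}_0(\mathcal{C}_\Omega)}^q$, and finally $\|z\|_{X^{1/2}_0(\mathcal{C}_\Omega)}<\sqrt2\,\gamma$ turns this into
\[
\Psi(z)\le\|\beta\|_\infty\Big(\sqrt2\,a_1c_1\gamma+\tfrac{2^{q/2}}{q}a_2c_q^q\gamma^q\Big).
\]
Dividing by $\gamma^2$ and recalling the definitions of $K_1$ and $K_2$, the factors $(2/\tau)^n$, $\omega_n\beta_0$ and $h_\Omega^{(n)}$ all cancel, and one checks directly that
\[
\frac{1}{\varphi(r)}\ge\frac{\gamma}{\|\beta\|_\infty\big(\sqrt2\,a_1c_1+\frac{2^{q/2}}{q}a_2c_q^q\gamma^{q-1}\big)}=\mu_2.
\]

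Once $\lambda<\mu_2\le 1/\varphi(r)$ is established, Theorem \ref{BMB} applies: the restriction of $\mathcal{J}_\lambda=\Phi-\lambda\Psi$ to $\Phi^{-1}((-\infty,\gamma^2))$ attains a global minimum $w_{1,\lambda}$, which is a local minimum (in particular a critical point) of $\mathcal{J}_\lambda$ on all of $X^{1/2}_0(\mathcal{C}_\Omega)$, as claimed. I do not expect any genuine difficulty beyond bookkeeping: the only delicate point is matching the constants to the prescribed form of $\mu_2$. In particular, the geometric quantities $g_\Omega^{(n)}$ and $h_\Omega^{(n)}$ turn out to be inessential for this proposition---they cancel---and are present only because $K_1,K_2,\mu_2$ are shared with the companion propositions, where a nonzero test function supported on $B(x_0,\tau)$ makes them genuinely appear.
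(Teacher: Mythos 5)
Your proposal is correct and takes essentially the same route as the paper's own proof: both apply Ricceri's Theorem \ref{BMB} with $r=\gamma^2$, bound $\varphi(\gamma^2)$ by testing the infimum at $w=0$, and estimate $\sup_{\Phi(z)\le \gamma^2}\Psi(z)$ via the integrated growth bound $F(t)\le a_1|t|+\tfrac{a_2}{q}|t|^q$ together with the trace inequality \eqref{constants}. Your cancellation remark is also accurate: the paper's identity $\chi(\gamma^2)\le 1/\mu_2$ is exactly the statement that $\mu_2=\gamma\big/\bigl(\|\beta\|_\infty\bigl(\sqrt2\,a_1c_1+\tfrac{2^{q/2}}{q}a_2c_q^q\gamma^{q-1}\bigr)\bigr)$, with $h^{(n)}_\Omega$ and the other geometric factors cancelling.
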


\begin{proof}
Owing to the growth condition \eqref{S}, one has
\begin{equation}\label{inequality}
F(t)\leq a_1|t| + \frac{a_2}{q}|t|^{q},
\end{equation}
for every $t\in\R$. With the idea of using Theorem \ref{BMB}, let us consider the function
$$
\chi(r):=\frac{\displaystyle\sup_{z\in\Phi^{-1}((-\infty,r])}\Psi(z)}{r},
$$
with $r\in (0,+\infty)$. It follows from \eqref{inequality} and \eqref{constants} that, for each $z\in X^{1/2}_0(\mathcal{C}_\Omega)$,
\begin{align*}
\Psi(z)&=\int_\Omega \beta(x)F(\Tr(z)(x))dx\\
&\leq \left(a_1\|\Tr(z)\|_{L^1(\Omega)}+\frac{a_2}{q}\|\Tr(z)\|_{L^q(\Omega)}^q\right)\|\beta\|_{\infty}\\
& \leq \left(a_1c_1\|z\|_{X^{1/2}_0(\mathcal{C}_\Omega)} + \frac{a_2}{q} c_q^q\|z\|_{X^{1/2}_0(\mathcal{C}_\Omega)}^q\right)\|\beta\|_{\infty}
\end{align*}
and therefore
\begin{eqnarray}\label{hh}
\sup_{z\in\Phi^{-1}((-\infty,r])}\Psi(z)\leq\sqrt{2r} a_1 c_1\|\beta\|_{\infty} + \frac{(2r)^{q/2}a_2 c_q^q}{q}\|\beta\|_{\infty}.
\end{eqnarray}
The above inequality yields
\begin{equation}\label{n}
\chi(r)\leq \sqrt{\frac{2}{r}} a_1 c_1\|\beta\|_{\infty} + \frac{2^{q/2} a_2 c_q^q}{q}r^{q/2-1}\|\beta\|_{\infty}
\end{equation}
for every $r>0$ and therefore
\begin{align*}\label{c}
\chi(\gamma^2)=\frac{\displaystyle\sup_{z\in\Phi^{-1}((-\infty,\gamma^2])}\Psi(z)}{\gamma^2} &\leq\sqrt{2}\frac{a_1c_1}{\gamma}\|\beta\|_{\infty} + \frac{2^{q/2}a_2 c_q^q}{q}\gamma^{q-2}\|\beta\|_{\infty} \\
                   & =\beta_0\frac{\tau^n}{2^n}\frac{\omega_n}{h^{(n)}_\Omega}\left(a_1\displaystyle\frac{K_1}{\gamma}+a_2K_2\gamma^{q-2}\right)\\
                   & =\frac{1}{\mu_2}.
\end{align*}

Now, bearing in mind that $0\in\Phi^{-1}((-\infty,{\gamma}^2))$ and $\Phi(0)=\Psi(0)=0$, we observe that
$$
\varphi({\gamma}^2):=\inf_{w\in\Phi^{-1}((-\infty,{\gamma}^2))}\frac{\displaystyle\left(\sup_{z\in\Phi^{-1}((-\infty,{\gamma}^2))}\Psi(z)\right)-\Psi(w)}{\gamma^2-\Phi(w)}\leq \chi({\gamma}^2)
$$
and thus
$$
\lambda\in (0,\mu_2)\subseteq (0,{1}/{\varphi({\gamma}^2)}).
$$

Then, in the light of Theorem \ref{BMB} there exists a function $w_{1,\lambda}\in\Phi^{-1}((-\infty,{\gamma}^2))$ such that
$$
\mathcal J_{\lambda}'(w_{1,\lambda})=\Phi'(w_{1,\lambda})-\lambda\Psi'(w_{1,\lambda})=0
$$
and, in particular, $w_{1,\lambda}$ is a global minimum of the restriction of $\mathcal J_{\lambda}$ to $\Phi^{-1}((-\infty,{\gamma}^2))$. This completes the proof.
\end{proof}

For the sequel we need to define suitable test functions in the space $X^{1/2}_0(\mathcal{C}_\Omega)$. Take two positive constants $\gamma$ and $\varrho$ such that
\begin{equation}\label{tiau2}
\varrho>\frac{\gamma}{\sqrt{g^{(n)}_\Omega}}
\end{equation}
and define the truncated cones $\omega^{\varrho}_{\tau}:\Omega \to \R$ as follows:
\begin{equation*}\label{deftruncfunc}
\omega^{\varrho}_{\tau}(x):=
\left\{
\begin{array}{ll}
0 & \mbox{ if $x \in \overline\Omega \setminus B(x_0,\tau)$} \\
\displaystyle\frac{2\varrho}{\tau} \left(\tau- |x-x_0| \right)
& \mbox{ if $x \in B(x_0,\tau) \setminus B(x_0,\tau/2)$} \\
\varrho & \mbox{ if $x \in B(x_0,{\tau}/2).$}
\end{array}
\right.
\end{equation*}

It is easily seen that
\begin{equation}\label{norma}
\begin{aligned}
  \int_\Omega |\nabla\omega^{\varrho}_{\tau}(x)|^2\,dx & = \int_{B(x_0,\tau)\setminus B(x_0,\tau/2)}\frac{4\varrho^2}{\tau^2}dx \\
                                & = \frac{4\varrho^2}{\tau^2} (|B(x_0,\tau)|-|B(x_0,\tau/2)|)\\
                                & = 4\varrho^2 \omega_n\tau^{n-2}\left(1-\frac{1}{2^n}\right).
\end{aligned}
\end{equation}
\indent Let
$$
w_{\tau}^{\varrho}(x,y):=e^{-\frac{y}{2}}\omega_\tau^{\varrho}(x),\quad\, \forall (x,y)\in \mathcal{C}_\Omega.
$$
Clearly, $w_{\tau}^{\varrho}\in X_0^{1/2}(\mathcal{C}_\Omega)$ and, since
$$
|\nabla w_{\tau}^{\varrho}(x,y)|^2=e^{-y}|\nabla \omega_{\tau}^{\varrho}(x)|^2+\frac{1}{4}e^{-y}|\omega_{\tau}^{\varrho}(x)|^2,\quad\, \forall (x,y)\in \mathcal{C}_\Omega
$$
it follows that
\begin{equation}\label{norma2}
\begin{aligned}
\|w_{\tau}^{\varrho}\|^2_{X^{1/2}_0(\mathcal{C}_\Omega)} & := \int_{\mathcal{C}_\Omega}|\nabla w_{\tau}^{\varrho}(x,y)|^2\,dxdy\\
                   & = \int_{\mathcal{C}_\Omega}e^{-y}|\nabla \omega^{\varrho}_\tau(x)|^2\,dxdy+\frac{1}{4} \int_{\mathcal{C}_\Omega}e^{-y}|\omega^{\varrho}_\tau(x)|^2\,dxdy\\
                   & =\int_0^{+\infty}e^{-y}dy \left(\int_{\Omega}|\nabla \omega^{\varrho}_\tau(x)|^2\,dx+\frac{1}{4} \int_{\Omega}|\omega^{\varrho}_\tau(x)|^2\,dx\right)\\
                   & =\int_{\Omega}|\nabla \omega^{\varrho}_\tau(x)|^2\,dx+\frac{1}{4} \int_{\Omega}|\omega^{\varrho}_\tau(x)|^2\,dx.\\
\end{aligned}
\end{equation}

Thus, \eqref{norma} and \eqref{norma2} provide the estimate
\begin{equation}\label{norma3}
4 \omega_n\tau^{n-2}\left(1-\frac{1}{2^n}\right)\varrho^2 \leq \|w_{\tau}^{\varrho}\|^2_{X^{1/2}_0(\mathcal{C}_\Omega)}\leq \left(4 \omega_n\tau^{n-2}\left(1-\frac{1}{2^n}\right)+\frac{|\Omega|}{4}\right)\varrho^2.
\end{equation}

Define
$$
\mu_1:=\left(\frac{2^n h^{(n)}_\Omega}{\omega_n\tau^n\beta_0}\right)\frac{\varrho^2}{F(\varrho)}.
$$

\begin{proposition}\label{Prop2}
The following inequality holds
\begin{equation}\label{phigamma}
\Phi(w_{\tau}^{\varrho})>\gamma^2.
\end{equation}
In addition, assuming that $\mu_1<\mu_2$ and $\lambda\in(\mu_1,\mu_2)$, one has
\begin{equation}\label{phigamma2}
\Phi(w_{\tau}^{\varrho})-\lambda\Psi(w_{\tau}^{\varrho})<\gamma^2-\lambda\sup_{w\in \Phi^{-1}((-\infty,\gamma^2])}\Psi(w).
\end{equation}
\end{proposition}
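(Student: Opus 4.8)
The plan is to treat the two inequalities separately, leaning on the two-sided control of the norm of the test function provided by \eqref{norma3}, together with the sign condition \eqref{Segno} on the potential $F$.

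For \eqref{phigamma} I would first read off from \eqref{norma3} that $\|w_{\tau}^{\varrho}\|^2_{X^{1/2}_0(\mathcal{C}_\Omega)}\ge 2g^{(n)}_\Omega\varrho^2$, after observing that $4\omega_n\tau^{n-2}(1-2^{-n})=2g^{(n)}_\Omega$ by the very definition of $g^{(n)}_\Omega$. Hence $\Phi(w_{\tau}^{\varrho})\ge g^{(n)}_\Omega\varrho^2$, and the admissibility condition \eqref{tiau2} on $\varrho$, which reads $g^{(n)}_\Omega\varrho^2>\gamma^2$, yields \eqref{phigamma} immediately.

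For \eqref{phigamma2} the idea is to bound each side of the claimed inequality by a quantity of \emph{definite sign}. On the left I would combine the upper estimate $\Phi(w_{\tau}^{\varrho})\le h^{(n)}_\Omega\varrho^2$ from \eqref{norma3} (using $4\omega_n\tau^{n-2}(1-2^{-n})+|\Omega|/4=2h^{(n)}_\Omega$) with a lower bound on $\Psi(w_{\tau}^{\varrho})$. Since $\Tr(w_{\tau}^{\varrho})=\omega_\tau^\varrho$ equals $\varrho$ on $B(x_0,\tau/2)$ and lies in $[0,\varrho]$ elsewhere, and since $\beta\ge\beta_0>0$ while $F\ge 0$ on $[0,+\infty)$ by \eqref{Segno}, I can discard the nonnegative contributions coming from $B(x_0,\tau)\setminus B(x_0,\tau/2)$ and from $\Omega\setminus B(x_0,\tau)$, retaining only the inner ball to get $\Psi(w_{\tau}^{\varrho})\ge\beta_0 F(\varrho)\,\omega_n\tau^n/2^n=\beta_0 F(\varrho)\,|B(x_0,\tau/2)|$. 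Therefore $\Phi(w_{\tau}^{\varrho})-\lambda\Psi(w_{\tau}^{\varrho})\le h^{(n)}_\Omega\varrho^2-\lambda\beta_0 F(\varrho)\,\omega_n\tau^n/2^n$, and unwinding the definition of $\mu_1$ shows that the hypothesis $\lambda>\mu_1$ forces this last quantity to be strictly negative.

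On the right I would invoke the estimate $\chi(\gamma^2)\le 1/\mu_2$ already derived in the proof of Theorem \ref{Prop1}, which gives $\sup_{w\in\Phi^{-1}((-\infty,\gamma^2])}\Psi(w)\le\gamma^2/\mu_2$; since $\lambda<\mu_2$, it follows that $\gamma^2-\lambda\sup_{w\in\Phi^{-1}((-\infty,\gamma^2])}\Psi(w)\ge\gamma^2(1-\lambda/\mu_2)>0$, so the right-hand side of \eqref{phigamma2} is strictly positive. Comparing the two signs — left side negative, right side positive — closes the argument. The only genuinely delicate point is the bookkeeping that lets the two thresholds $\mu_1,\mu_2$ do all the work: once the constants $g^{(n)}_\Omega$, $h^{(n)}_\Omega$, $\mu_1$, $\mu_2$ are matched against the geometric factor $\omega_n\tau^n/2^n$, the standing assumption $\mu_1<\mu_2$ guarantees that the window $(\mu_1,\mu_2)$ is nonempty and the sign split suffices, so that no sharp comparison between the two sides is ever required.
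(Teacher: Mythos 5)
Your proof is correct. The first inequality is handled exactly as in the paper (the identification $4\omega_n\tau^{n-2}(1-2^{-n})=2g_\Omega^{(n)}$, the lower bound in \eqref{norma3}, and \eqref{tiau2}), but for \eqref{phigamma2} you take a genuinely different route in the concluding step. Both arguments rest on the same two estimates: the lower bound $\Psi(w_\tau^\varrho)\ge \beta_0\omega_n 2^{-n}\tau^n F(\varrho)$ obtained from \eqref{Segno} (the paper's \eqref{l}), and the bound $\sup_{w\in\Phi^{-1}((-\infty,\gamma^2])}\Psi(w)\le \gamma^2/\mu_2$ carried over from the proof of Theorem \ref{Prop1}. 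The paper combines them through the difference quotient $\bigl(\Psi(w_\tau^\varrho)-\sup_{w\in\Phi^{-1}((-\infty,\gamma^2])}\Psi(w)\bigr)/\bigl(\Phi(w_\tau^\varrho)-\gamma^2\bigr)$, shows via \eqref{condition} that it is bounded below by $\Psi(w_\tau^\varrho)/\Phi(w_\tau^\varrho)\ge 1/\mu_1>1/\lambda$, and clears denominators (legitimate since $\Phi(w_\tau^\varrho)-\gamma^2>0$ by \eqref{phigamma}); this quotient form mirrors the structure of Ricceri's function $\varphi(r)$ in Theorem \ref{BMB}, which is why the paper phrases it that way. You instead decouple the two sides: $\lambda>\mu_1$ makes the left-hand side of \eqref{phigamma2} strictly negative, while $\lambda<\mu_2$ makes the right-hand side strictly positive, so the inequality follows from the sign split alone. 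Your version proves slightly more (each side has a definite sign) and avoids the algebraic manipulation of the quotient. One point worth making explicit: multiplying $\lambda>\mu_1$ through by $F(\varrho)$ requires $F(\varrho)>0$; this is automatic here, since the standing hypothesis $\mu_1<\mu_2<+\infty$ forces $\mu_1$ to be finite, which together with \eqref{Segno} gives $F(\varrho)>0$ — the paper's proof relies on the same implicit fact when it writes $1/\mu_1\le \Psi(w_\tau^\varrho)/\Phi(w_\tau^\varrho)$, so this is not a gap peculiar to your argument.
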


\begin{proof}
The estimate \eqref{phigamma}  follows at once from \eqref{tiau2} and \eqref{norma3}.

From \eqref{Segno} we infer
\begin{eqnarray}\label{l}
\int_\Omega \beta(x)F(\Tr(w_{\tau}^{\varrho})(x))\;dx\geq \beta_0\omega_n\frac{\tau^n}{2^n}F(\varrho),
\end{eqnarray}
which, together with \eqref{norma3}, gives
\begin{equation}\label{condition}
\frac{\Psi(w_{\tau}^{\varrho})}{\Phi(w_{\tau}^{\varrho})}\geq \beta_0\frac{\omega_n}{h^{(n)}_\Omega}\frac{\tau^n}{2^n}\frac{F(\varrho)}{\varrho^2}.
\end{equation}

Since $\mu_1<\lambda<\mu_2$, we obtain
$$
\chi(\gamma^2)\leq \frac{1}{\mu_2}<\frac{1}{\lambda}<\frac{1}{\mu_1}\leq \frac{\Psi(w_{\tau}^{\varrho})}{\Phi(w_{\tau}^{\varrho})}
$$
and therefore
\begin{equation}\label{norma22finale}
\begin{aligned}
   \frac{\Psi(w_{\tau}^{\varrho})-\displaystyle\sup_{w\in \Phi^{-1}((-\infty,\gamma^2])}\Psi(w)}{\Phi(w_{\tau}^{\varrho})-\gamma^2}&\geq
   \frac{\Psi(w_{\tau}^{\varrho})-\gamma^2\displaystyle\frac{\Psi(w_\tau^{\varrho})}{\Phi(w_\tau^{\varrho})}}{\Phi(w_{\tau}^{\varrho})-\gamma^2}   \\
   &=\frac{\Psi(w_{\tau}^{\varrho})}{\Phi(w_{\tau}^{\varrho})} \\
   &\geq \frac{1}{\mu_1} >\frac{1}{\lambda}.
\end{aligned}
\end{equation}
This implies inequality \eqref{phigamma2} and the proof is thus completed.
\end{proof}

Associated with $\gamma$ let us consider, for every $w\in X^{1/2}_0(\mathcal{C}_\Omega)$, the following truncated functional
\begin{equation*}\label{deftruncfunc}
\mathcal J_{\lambda}^{(\gamma)}(w):=
\left\{
\begin{array}{ll}
\gamma^2-\lambda \Psi(w) & \mbox{ if $w\in \Phi^{-1}((-\infty,\gamma^2])$} \\
\mathcal J_{\lambda}(w) & \mbox{ if $w\notin \Phi^{-1}((-\infty,\gamma^2])$.}
\end{array}
\right.
\end{equation*}
Fixing $\lambda>0$, since $\mathcal J_{\lambda}^{(\gamma)}$ is sequentially weakly lower semicontinuous and coercive on the Hilbert space $X^{1/2}_0(\mathcal{C}_\Omega)$, it will attain a global minimum $w_{2,\lambda}$, that is:
\begin{equation}\label{globminu}
\mathcal J_{\lambda}^{(\gamma)}(w_{2,\lambda})\leq \mathcal J_{\lambda}^{(\gamma)}(w),\quad \forall\, w\in X^{1/2}_0(\mathcal{C}_\Omega).
\end{equation}

The next result states the impossibility for $w_{2,\lambda}$ to lie in the ball $B(0,\sqrt{2}\gamma)$ when $\lambda\in(\mu_1,\mu_2)$.

\begin{theorem}\label{Prop3}
Assume $\lambda\in(\mu_1,\mu_2)$. Then
$$
w_{2,\lambda}\notin \Phi^{-1}((-\infty,\gamma^2])
$$
and $\mathcal J_{\lambda}'(w_{2,\lambda})=0$, i.e. $w_{2,\lambda}\in X^{1/2}_0(\mathcal{C}_\Omega)$ is a critical point of $\mathcal J_{\lambda}$.
\end{theorem}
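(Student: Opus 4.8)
The plan is to leverage the two conclusions of Proposition \ref{Prop2} to locate the global minimizer $w_{2,\lambda}$ of the truncated functional $\mathcal J_\lambda^{(\gamma)}$ strictly outside the closed ball $\Phi^{-1}((-\infty,\gamma^2])$, and then to exploit the fact that outside this ball the truncated functional coincides with the genuine energy $\mathcal J_\lambda$. First I would compute the infimum of $\mathcal J_\lambda^{(\gamma)}$ over the ball. By the very definition of the truncated functional, for every $w\in\Phi^{-1}((-\infty,\gamma^2])$ one has $\mathcal J_\lambda^{(\gamma)}(w)=\gamma^2-\lambda\Psi(w)$, so that
$$
\inf_{w\in\Phi^{-1}((-\infty,\gamma^2])}\mathcal J_\lambda^{(\gamma)}(w)=\gamma^2-\lambda\sup_{w\in\Phi^{-1}((-\infty,\gamma^2])}\Psi(w).
$$

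Next I would bring in the test function $w_\tau^\varrho$. Inequality \eqref{phigamma} guarantees $\Phi(w_\tau^\varrho)>\gamma^2$, hence $w_\tau^\varrho\notin\Phi^{-1}((-\infty,\gamma^2])$ and therefore $\mathcal J_\lambda^{(\gamma)}(w_\tau^\varrho)=\mathcal J_\lambda(w_\tau^\varrho)=\Phi(w_\tau^\varrho)-\lambda\Psi(w_\tau^\varrho)$. Invoking inequality \eqref{phigamma2}, which is precisely where the hypothesis $\lambda\in(\mu_1,\mu_2)$ enters, I obtain
$$
\mathcal J_\lambda^{(\gamma)}(w_\tau^\varrho)=\Phi(w_\tau^\varrho)-\lambda\Psi(w_\tau^\varrho)<\gamma^2-\lambda\sup_{w\in\Phi^{-1}((-\infty,\gamma^2])}\Psi(w)=\inf_{w\in\Phi^{-1}((-\infty,\gamma^2])}\mathcal J_\lambda^{(\gamma)}(w).
$$
Since $w_{2,\lambda}$ is a global minimizer (cf.\ \eqref{globminu}), it follows that $\mathcal J_\lambda^{(\gamma)}(w_{2,\lambda})\leq\mathcal J_\lambda^{(\gamma)}(w_\tau^\varrho)$, which is strictly smaller than the infimum of $\mathcal J_\lambda^{(\gamma)}$ over the ball. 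Consequently $w_{2,\lambda}$ cannot belong to $\Phi^{-1}((-\infty,\gamma^2])$, establishing the first assertion.

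For the second assertion I would use the openness of the complement of the ball. Because $\Phi$ is continuous and $\Phi(w_{2,\lambda})>\gamma^2$, there is an open neighborhood $U$ of $w_{2,\lambda}$ on which $\Phi>\gamma^2$, so that $\mathcal J_\lambda^{(\gamma)}$ and $\mathcal J_\lambda$ agree on $U$. Since $w_{2,\lambda}$ minimizes $\mathcal J_\lambda^{(\gamma)}$ globally, it is in particular a local minimizer of $\mathcal J_\lambda$ on $U$; as $\mathcal J_\lambda$ is of class $C^1$, this yields $\mathcal J_\lambda'(w_{2,\lambda})=0$. I do not expect a serious obstacle here: the quantitative work has already been carried out in Proposition \ref{Prop2}, and the remaining argument is essentially topological. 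The only point requiring care is the transition from a minimizer of the truncated functional to a critical point of the true functional, which rests entirely on the fact that the level set $\{\Phi>\gamma^2\}$ is open and that the two functionals coincide there.
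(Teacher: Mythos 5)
Your proposal is correct and follows essentially the same route as the paper: both use Proposition \ref{Prop2} together with the global minimality \eqref{globminu} of $w_{2,\lambda}$ and the fact that $w_{\tau}^{\varrho}$ lies outside $\Phi^{-1}((-\infty,\gamma^2])$, the only cosmetic difference being that you argue directly by comparing with the infimum of $\mathcal J_{\lambda}^{(\gamma)}$ over the ball, whereas the paper phrases the same comparison as a contradiction. You also spell out the open-neighborhood argument giving $\mathcal J_{\lambda}'(w_{2,\lambda})=0$, a step the paper's proof leaves implicit, and your justification of it is correct.
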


\begin{proof}
Pick $\lambda\in(\mu_1,\mu_2)$ and, arguing by contradiction, assume that $$
w_{2,\lambda}\in \Phi^{-1}((-\infty,\gamma^2]).
$$
In view of \eqref{globminu} it follows that
\begin{equation}\label{globminu11}
\gamma^2-\lambda \Psi(w_{2,\lambda})\leq \mathcal J_{\lambda}(w_\tau^{\varrho}),
\end{equation}
while due to Proposition \ref{Prop2} one has
\begin{equation}\label{globminu12}
\mathcal J_{\lambda}(w_\tau^{\varrho}) < \gamma^2-\lambda \Psi(w_{2,\lambda}).
\end{equation}
Collecting inequalities \eqref{globminu11} and \eqref{globminu12} we get the desired contradiction.
\end{proof}

We are now in position to prove our main result.
\begin{theorem}\label{generalkernel0f}
Let $\Omega$ be an open bounded set of $\R^n$ $\mathopen{(}n\geq 2\mathclose{)}$ with Lipschitz boundary $\partial\Omega$, $\beta:\Omega\to\R$ an $L^\infty$-map satisfying \eqref{proprietabeta} and $f:\R\to\R$ a continuous function satifying \eqref{S} and \eqref{Segno}. Furthermore, assume that the following algebraic inequality holds
\begin{equation}\label{AI}
\frac{F(\varrho)}{\varrho^2}> \displaystyle a_1\frac{K_1}{\gamma}+a_2K_2\gamma^{q-2}
\end{equation}
for some $\varrho,\gamma >0$ satifying \eqref{tiau2}, in addition to
\begin{equation}\label{AII}
F(t)\leq b(1+|t|^l)
\end{equation}
for all $t\in\R$ and for some positive constants $b$ and $l<2$.

Then for each $\lambda\in (\mu_1,\mu_2)$, problem \eqref{problema} has at least three weak solutions  $u_{1,\lambda}, u_{2,\lambda}, u_{3,\lambda}\in L^{\infty}(\Omega) \cap H_0^{1/2}(\Omega)$.
\end{theorem}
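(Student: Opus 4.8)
The plan is to combine the two local minima furnished by the preparatory results with the Pucci--Serrin principle of Theorem~\ref{PucciSerrin}, and then transfer the resulting critical points to traces. First I would check that the prescribed parameter range is nonempty. Recalling the definitions of $\mu_1$ and $\mu_2$, both carry the common factor $2^n h^{(n)}_\Omega/(\omega_n\tau^n\beta_0)$, so a direct rearrangement shows that $\mu_1<\mu_2$ is equivalent to $F(\varrho)/\varrho^2 > a_1 K_1/\gamma + a_2 K_2\gamma^{q-2}$, that is, precisely to hypothesis~\eqref{AI}. Hence $(\mu_1,\mu_2)\neq\emptyset$ and we may fix $\lambda$ in it. Since in particular $\lambda<\mu_2$, Theorem~\ref{Prop1} supplies a local minimum $w_{1,\lambda}\in\Phi^{-1}((-\infty,\gamma^2))$ of $\mathcal J_\lambda$. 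Independently, the truncated functional $\mathcal J^{(\gamma)}_\lambda$ is coercive and sequentially weakly lower semicontinuous on $X^{1/2}_0(\mathcal C_\Omega)$, hence attains a global minimizer $w_{2,\lambda}$; since $\lambda\in(\mu_1,\mu_2)$, Theorem~\ref{Prop3} guarantees $w_{2,\lambda}\notin\Phi^{-1}((-\infty,\gamma^2])$ and that $w_{2,\lambda}$ is a critical point of $\mathcal J_\lambda$ itself.

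Next I would promote $w_{2,\lambda}$ to a genuine local minimum of $\mathcal J_\lambda$. As $\Phi$ is continuous, the set $\{w:\Phi(w)>\gamma^2\}$ is open and contains $w_{2,\lambda}$; on this set $\mathcal J_\lambda$ and $\mathcal J^{(\gamma)}_\lambda$ coincide by construction, so the global minimality of $w_{2,\lambda}$ for $\mathcal J^{(\gamma)}_\lambda$ yields local minimality for $\mathcal J_\lambda$. Moreover $w_{1,\lambda}\neq w_{2,\lambda}$, because $\Phi(w_{1,\lambda})<\gamma^2<\Phi(w_{2,\lambda})$. Thus $\mathcal J_\lambda$ admits two distinct local minima.

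The step I expect to be the most delicate is the verification of the Palais--Smale condition, which is exactly where the subquadratic bound~\eqref{AII} enters. From $F(t)\leq b(1+|t|^l)$ with $l<2$ together with the continuous embedding $\Tr(X^{1/2}_0(\mathcal C_\Omega))\hookrightarrow L^l(\Omega)$ one obtains $\mathcal J_\lambda(w)\geq \tfrac12\|w\|^2_{X^{1/2}_0(\mathcal C_\Omega)}-\lambda b\|\beta\|_\infty\bigl(|\Omega|+c_l^l\|w\|^l_{X^{1/2}_0(\mathcal C_\Omega)}\bigr)$, whence $\mathcal J_\lambda$ is coercive and every $(\mathrm{PS})$ sequence is bounded. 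By reflexivity such a sequence converges weakly, up to a subsequence, to some $z$; the compactness of the embedding makes $\Psi'$ compact, so $\Psi'(z_j)(z_j-z)\to0$, while $\mathcal J'_\lambda(z_j)(z_j-z)\to0$ together with the identity $\Phi'(z_j)(z_j-z)=\langle z_j,z_j-z\rangle_{X^{1/2}_0(\mathcal C_\Omega)}$ forces $\|z_j-z\|_{X^{1/2}_0(\mathcal C_\Omega)}\to0$. Hence $\mathcal J_\lambda$ satisfies $(\mathrm{PS})$.

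Finally, Theorem~\ref{PucciSerrin} applied to the $C^1$ functional $\mathcal J_\lambda$, which now possesses two local minima and satisfies $(\mathrm{PS})$, yields a third critical point $w_{3,\lambda}\notin\{w_{1,\lambda},w_{2,\lambda}\}$. To conclude I would pass to traces: testing~\eqref{EL} against functions vanishing on $\Omega\times\{0\}$ shows that any critical point $w$ is harmonic in $\mathcal C_\Omega$, hence coincides with the harmonic extension of its own trace; therefore distinct critical points have distinct traces, and $u_{i,\lambda}:=\Tr(w_{i,\lambda})$, $i=1,2,3$, are three distinct weak solutions of~\eqref{problema}. Their $L^\infty(\Omega)$-boundedness is then provided by \cite[Theorem 5.2]{cabretan}, which completes the argument.
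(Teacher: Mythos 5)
Your proposal is correct and takes essentially the same route as the paper's proof: \eqref{AI} forces $\mu_1<\mu_2$, Theorems \ref{Prop1} and \ref{Prop3} supply two distinct local minima of $\mathcal J_\lambda$, the subquadratic bound \eqref{AII} gives coercivity and hence the (PS) condition, Pucci--Serrin (Theorem \ref{PucciSerrin}) yields the third critical point, and the traces are $L^\infty$-bounded by \cite[Theorem 5.2]{cabretan}. The details you add beyond the paper---the open-set argument promoting $w_{2,\lambda}$ to a local minimum of $\mathcal J_\lambda$ and the harmonicity argument showing distinct critical points have distinct traces---are welcome refinements of steps the paper leaves implicit; the only small caveat is that your embedding $\Tr(X^{1/2}_0(\mathcal C_\Omega))\hookrightarrow L^l(\Omega)$ tacitly assumes $l\geq 1$, which is harmless since for $l<1$ one may replace $l$ by $1$ in \eqref{AII} (or argue via H\"older and the $L^2$-embedding, as the paper does).
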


\begin{proof}
It is easy to verify that condition \eqref{AI} forces $\mu_1 <\mu_2$. Then fixing $\lambda\in (\mu_1,\mu_2)$ and appealing to Theorems \ref{Prop1} and \ref{Prop3}, we can deduce the existence of the first two solutions, recalling that $u_{j,\lambda}=Tr(w_{j,\lambda})$, $j=1,2$. The nature of local minima of the functions $w_{1,\lambda}$, $w_{2,\lambda}$ permits us to apply Theorem \ref{PucciSerrin}. Let us show that $\mathcal J_{\lambda}$ satisfies $\textrm{(PS)}_{\mu}$ for $\mu\in\R$. Let $\{w_{j}\}_{j\in \N}\subset X^{1/2}_0(\mathcal{C}_\Omega)$ satisfy
$$
\mathcal J_{\lambda}(w_j)\to \mu \quad{\rm and}\quad \|\mathcal J'_{\lambda}(w_j)\|_{*}\to 0
$$
as $j\rightarrow +\infty$, where
$$
\|\mathcal J'_{\lambda}(w_j)\|_{*}:=\sup\Big\{\big|\langle\,\mathcal J_{\lambda}'(w_j),\varphi \rangle
\big|: \; \varphi\in X^{1/2}_0(\mathcal{C}_\Omega) \quad {\rm and}\quad \|\varphi\|_{X^{1/2}_0(\mathcal{C}_\Omega)}=1\Big\}.
$$

Since $l<2$, for every $w\in X^{1/2}_0(\mathcal{C}_\Omega)$ one has $|\Tr(w)|^{l}\in L^{2/l}(\Omega)$ and combined with H\"{o}lder's inequality this gives
$$
\int_{\Omega}|\Tr(w)(x)|^ldx\leq |\Omega|^{\frac{2-l}{2}}\|\Tr(w)\|_{L^2(\Omega)}^{l} \quad\forall\, w\in X^{1/2}_0(\mathcal{C}_\Omega)
$$
and, by \eqref{constants},
\begin{equation}\label{co}
\int_{\Omega}|\Tr(w)(x)|^ldx\leq c_2^l|\Omega|^{\frac{2-l}{2}}\|w\|^{l}_{X^{1/2}_0(\mathcal{C}_\Omega)} \quad \forall\, w\in X^{1/2}_0(\mathcal{C}_\Omega).
\end{equation}
In the light of inequalities \eqref{AII} and \eqref{co} we get
$$
\mathcal J_{\lambda}(w)\geq \frac{1}{2}\|w\|^2_{X^{1/2}_0(\mathcal{C}_\Omega)}-\lambda bc_2^l\|\beta\|_{\infty}|\Omega|^{\frac{2-l}{2}}\|w\|^{l}_{X^{1/2}_0(\mathcal{C}_\Omega)}-\lambda b\|\beta\|_{\infty}|\Omega| \quad\forall\, w\in X^{1/2}_0(\mathcal{C}_\Omega),
$$
and therefore $\mathcal J_{\lambda}$ is bounded from below and
$\mathcal J_{\lambda}(w)\rightarrow +\infty$ as $\|w\|_{X^{1/2}_0(\mathcal{C}_\Omega)}\rightarrow +\infty$. This allows us to deduce that the sequence $\{w_{j}\}_{j\in \N}$ is bounded in $X^{1/2}_0(\mathcal{C}_\Omega)$. Since $X^{1/2}_0(\mathcal{C}_\Omega)$ is reflexive we can extract a subsequence, for simplicity denoted again $\{w_{j}\}_{j\in \N}$, such that
$w_{j}\rightharpoonup w_\infty$ in $X^{1/2}_0(\mathcal{C}_\Omega)$, i.e.,
\begin{equation}\label{conv0}
\int_{\mathcal{C}_\Omega}\langle \nabla w_j,\nabla \varphi \rangle dxdy\to
\int_{\mathcal{C}_\Omega}\langle \nabla w_{\infty},\nabla \varphi \rangle dxdy
\end{equation}
as $j\to+\infty$ for any $\varphi\in X^{1/2}_0(\mathcal{C}_\Omega)$.

We will prove that $w_{j}\to w_\infty$ as $j\to+\infty$. Keeping \eqref{derivatefunz} in mind, one has
\begin{equation}\label{jj}
\langle \Phi'(w_{j}),w_{j}-w_\infty\rangle = \langle \mathcal J_{\lambda}'(w_j),w_j-w_\infty\rangle + \lambda\int_{\Omega}\beta(x)f(\Tr(w_j)(x))\Tr(w_j-w_\infty)(x)dx,
\end{equation}
where
$$
\langle \Phi'(w_{j}),w_{j}-w_\infty\rangle = \displaystyle\int_{\mathcal{C}_\Omega}|\nabla w_j(x,y)|^2\,dxdy
              - \int_{\mathcal{C}_\Omega}\langle \nabla w_j,\nabla w_\infty \rangle dxdy.
$$

Since $\|\mathcal J_{\lambda}'(w_j)\|_{*}\to 0$ and the sequence $\{w_j-w_\infty\}_{j\in \N}$ is bounded in $X^{1/2}_0(\mathcal{C}_\Omega)$, taking into account the fact that $|\langle\mathcal J_{\lambda}'(w_j),w_j-w_\infty\rangle|\leq\|\mathcal J_{\lambda}'(w_j)\|_{*}\|w_j-w_\infty\|_{X^{1/2}_0(\mathcal{C}_\Omega)}$, one has
\begin{eqnarray}\label{j2}
\langle \mathcal J_{\lambda}'(w_j),w_j-w_\infty\rangle\to 0
\end{eqnarray}
as $j\to+\infty$. Furthermore, by \eqref{S} and H\"{o}lder's inequality one has
\begin{align*}
\int_{\Omega}\beta(x)|f(\Tr(w_j)(x))||\Tr(w_j-w_\infty)(x)|dx &\leq a_1 \left\|\beta\right\|_\infty\int_\Omega |\Tr(w_j-w_\infty)(x)|dx \\
& \quad + a_2\left\|\beta\right\|_\infty \int_\Omega |\Tr(w_j)(x)|^{q-1}|\Tr(w_j-w_\infty)(x)|dx \\
&\leq a_1 \left\|\beta\right\|_\infty \left\|\Tr(w_j-w_\infty)\right\|_1\\
& \quad +
 a_2\left\|\beta\right\|_\infty \left\|\Tr(w_j)\right\|_q^{q-1}\left\|\Tr(w_j-w_\infty)\right\|_q.
\end{align*}
Since the embeddings $\Tr(X^{1/2}_0(\mathcal{C}_\Omega))\hookrightarrow L^1(\Omega)$, $\Tr(X^{1/2}_0(\mathcal{C}_\Omega))\hookrightarrow L^q(\Omega)$ are compact, we obtain
\indent \begin{eqnarray}\label{j3}
\int_{\Omega}\beta(x)|f(\Tr(w_j)(x))||\Tr(w_j-w_\infty)(x)|dx\to 0
\end{eqnarray}
as $j\rightarrow +\infty$.

Relations \eqref{j2} and \eqref{j3} force
\begin{eqnarray}\label{fin}
\langle \Phi'(w_{j}),w_{j}-w_\infty\rangle
\rightarrow 0
\end{eqnarray}
as $j\rightarrow +\infty$ and hence
\begin{equation}\label{fin2}
\displaystyle\int_{\mathcal{C}_\Omega}|\nabla w_j(x,y)|^2\,dxdy
- \int_{\mathcal{C}_\Omega}\langle \nabla w_j,\nabla w_\infty \rangle dxdy\rightarrow 0
\end{equation}
as $j\rightarrow +\infty$. Thus it follows by \eqref{conv0} and \eqref{fin2} that
$$
\lim_{j\rightarrow +\infty}\displaystyle\int_{\mathcal{C}_\Omega}|\nabla w_j(x,y)|^2\,dxdy = \displaystyle\int_{\mathcal{C}_\Omega}|\nabla w_\infty(x,y)|^2\,dxdy,
$$
as desired, and the third solution to \eqref{problema} is obtained as well.
Finally, due to \eqref{S} and the essential boundedness of $\beta$, by \cite[Theorem 5.2]{cabretan} we get $u_{i,\lambda}:=\Tr(w_{i,\lambda})\in L^{\infty}(\Omega)$ for $i\in\{1,2,3\}$.
\end{proof}

As a corollary of the previous result, we can deduce Theorem \ref{boccia} stated in the introduction.\\

\noindent\emph{Proof of Theorem \ref{boccia}}. Let us consider the non-negative continuous function $f^{\star}:\R\rightarrow\R$ defined by
\[
f^{\star}(t):= \left\{
\begin{array}{ll}
\displaystyle f(t) & \mbox{ if\, $0<t\leq \zeta$} \\\\
\displaystyle 0 & \mbox{ otherwise,}
\end{array}
\right.
\]
and apply Theorem \ref{generalkernel0f} to the problem
\begin{equation}\label{problema244}
\left\{
\begin{array}{ll}
A_{1/2}u= \lambda \beta(x)f^{\star}(u) & \mbox{\rm in } \Gamma^0_r\\
u> 0 & \mbox{\rm on } \Gamma^0_r\\
u=0 & \mbox{\rm on } \partial\Gamma^0_r.
\end{array}\right.
\end{equation}
Now, fixing
$$
\lambda>\frac{2^n h^{(n)}_\Omega}{\omega_n\tau^n\beta_0}\displaystyle\inf_{0<\varrho\leq \zeta}\frac{\varrho^2}{F(\varrho)}
$$
there exists $\bar{\varrho}>0$ such that
$$
\lambda>\frac{2^n h^{(n)}_\Omega}{\omega_n\tau^n\beta_0}\displaystyle\frac{\bar\varrho^2}{F(\bar\varrho)}.
$$
If we take
$$
\gamma<\min\left\{\sqrt{g^{(n)}_\Omega}\bar{\varrho},\left(\frac{2^n h^{(n)}_{\Omega}}{a_2\omega_n\tau^n\beta_0K_2\lambda}\right)^{1/(m-1)}\right\},
$$
it is easily seen that all the assumptions of Theorem \ref{generalkernel0f} are satisfied. Hence there exist at least two weak solutions of problem \eqref{problema244} and, in turn, of problem \eqref{problema24}.
\qed

\begin{remark}
We point out that the operator $A_{1/2}$ we considered here is not to be
confused with the integro-differential operator defined, up to a constant, by
$$
(-\Delta)^{1/2} u(x):=
-\int_{\mathbb{R}^{n}}\frac{u(x+y)+u(x-y)-2u(x)}{|y|^{n+1}}\,dy, \quad\, \forall\, x\in \R^n.
$$
In fact, Servadei \& Valdinoci in \cite{svd} showed that these two operators, though often denoted the same way, are really different, with eigenvalues and eigenfunctions behaving quite differently (see also Musina \& Nazarov \cite{MuNa}).
\end{remark}

\begin{remark}
The techniques adopted in this paper are still valid if we consider the more general problem:
\begin{equation}\tag{$P_{\alpha,\lambda}$}\label{problemaalfa}
	\left\{
	\begin{array}{ll}
		A_{\alpha/2}u=\lambda \beta(x)f(u) & \mbox{ in } \Omega\\
		u=0 & \mbox{ on } \partial\Omega,
	\end{array}\right.
\end{equation}
with $\alpha\in(0,2)$. The operator $A_{\alpha/2}$ is defined by
$$
A_{\alpha/2}u(x):=-\kappa_\alpha\lim_{y\rightarrow 0^+}y^{1-\alpha}\frac{\partial \E(u)(x,y)}{\partial y}\quad\forall\, x\in \Omega,
$$
where $u$ belongs to the space
$$
H_0^{\alpha/2}(\Omega):=\left\{u\in L^2(\Omega): u=\displaystyle\sum_{j=1}^{\infty}a_j\varphi_j \mbox{  and } \displaystyle \left\| u\right\|_{H_0^{\alpha/2}(\Omega)}:= \left( \sum_{j=1}^{\infty} a_j^2\lambda_j^{\alpha/2}\right)^{1/2}  <+\infty\right\},
$$
and its $\alpha$-harmonic extension $\E_\alpha(u)$ to the cylinder $C_\Omega$ is the unique solution of the local problem
\begin{equation}\label{estensionealpha}
	\left\{
	\begin{array}{ll}
		\div(y^{1-\alpha}\nabla \E_\alpha(u))=  0  &\mbox{ in } \mathcal{C}_\Omega\\
		\displaystyle{\E_\alpha(u)=0}  &\mbox{ on } \partial_L\mathcal{C}_\Omega\\
		\Tr(\E_\alpha(u))=u  &\mbox{ on } \Omega.
	\end{array}\right.
\end{equation}
\end{remark}
Such an extension lies in the space
$$
X^{\alpha/2}_0(\mathcal{C}_\Omega):=\left\{w\in L^2(\mathcal{C}_\Omega): w=0\,\,{\rm on }\,\, \partial_L \mathcal{C}_\Omega,\,\, \left\| w\right\|_{X^{\alpha/2}_0(\mathcal{C}_\Omega)}<+\infty\right\},
$$
where
$$
\left\| w\right\|_{X^{\alpha/2}_0(\mathcal{C}_\Omega)}:= \left( \kappa_\alpha\int_{\mathcal{C}_\Omega}y^{1-\alpha}|\nabla w(x,y)|^2dxdy\right) ^{1/2}
$$
and 
$$
\kappa_\alpha:=\frac{\displaystyle\Gamma\left(\frac{\alpha}{2}\right)}{2^{1-\alpha}\displaystyle\Gamma\left(1-\frac{\alpha}{2}\right)}
$$
is a normalization constant which makes the operator  $\E_\alpha:H_0^{1/2}(\Omega)\rightarrow X^{1/2}_0(\mathcal{C}_\Omega)$ an isometry. As a class of test functions in $X^{\alpha/2}_0(\mathcal{C}_\Omega)$ necessary for our approach one can choose
$$
w_{\tau}^{\varrho}(x,y):=\frac{1-e^{-\alpha y}}{\alpha y}\omega_\tau^{\varrho}(x),\quad\, \forall (x,y)\in \mathcal{C}_\Omega,
$$
$\omega_\tau^{\varrho}$ being defined by \eqref{deftruncfunc}.

\bigskip

\noindent {\bf Acknowledgements.} The paper has been carried out under the auspices of the INdAM - GNAMPA Project 2016 titled: {\it Problemi variazionali su variet\`a Riemanniane e gruppi di Carnot} and the Slovenian Research Agency grants P1-0292, J1-7025, J1-6721 and J1-5435.

\end{document}